\documentclass[11pt]{article}

\usepackage{mathpazo}   
\usepackage{comment}
\usepackage[title]{appendix}
\usepackage[final]{showkeys}
\usepackage{seqsplit}
\usepackage{xstring}

\usepackage{wrapfig}
\usepackage{subcaption}
\usepackage{tikz}
\usepackage{pgfplots}
\pgfplotsset{compat=1.18}
\usetikzlibrary{patterns, pgfplots.fillbetween}
\graphicspath{{.}}
\usepackage[font=footnotesize]{caption}
\usepackage{nameref}
\usepackage{mathtools}
\usepackage{empheq}
\usepackage{comment}
\usepackage[shortlabels,inline]{enumitem}
\setlist[enumerate]{nosep}
\usepackage[colorlinks=true,
linkcolor=refkey,
urlcolor=lblue,
citecolor=red]{hyperref}
\usepackage[doc,wmm,hhb]{optional}
\usepackage{xcolor}

\usepackage{float}
\usepackage{soul}
\usepackage{graphicx}
\definecolor{labelkey}{rgb}{0,0.08,0.45}
\definecolor{refkey}{rgb}{0,0.6,0.0}
\definecolor{Brown}{rgb}{0.45,0.0,0.05}
\definecolor{lime}{rgb}{0.00,0.8,0.0}
\definecolor{lblue}{rgb}{0.5,0.5,0.99}
\definecolor{OliveGreen}{rgb}{0,0.6,0}
\definecolor{tyrianpurple}{rgb}{0.4, 0.01, 0.24}

\colorlet{hlcyan}{cyan!30}

\usepackage{stmaryrd}
\usepackage{amssymb}

\makeatletter
\def\namedlabel#1#2{\begingroup
   \def\@currentlabel{#2}%
   \label{#1}\endgroup
}

\makeatother

\newcommand{\seppthree}{\setlength{\itemsep}{-3pt}}

\usepackage[margin=0.92in,footskip=0.25in]{geometry}
\newcommand{\weakly}{\ensuremath{\:{\rightharpoonup}\:}}

\newcommand{\nnn}{\ensuremath{{n\in{\mathbb N}}}}

\newcommand{\thalb}{\ensuremath{\tfrac{1}{2}}}

\newcommand{\fenv}[1]%
{\ensuremath{\,\overrightarrow{\operatorname{env}}_{#1}}}
\newcommand{\benv}[1]%
{\ensuremath{\,\overleftarrow{\operatorname{env}}_{#1}}}

\newcommand{\scal}[2]{\left\langle{#1},{#2}  \right\rangle}

\newcommand{\RR}{\ensuremath{\mathbb R}}

\newcommand{\RX}{\ensuremath{\,\left]-\infty,+\infty\right]}}

\newcommand{\NN}{\ensuremath{\mathbb N}}

\newcommand{\prox}{\ensuremath{\operatorname{P}}}

\newcommand{\ran}{\ensuremath{{\operatorname{ran}}\,}}

\newcommand{\cspan}{\ensuremath{\overline{\operatorname{span}}\,}}

\newcommand{\Par}{\ensuremath{\operatorname{par}}}

\newcommand{\Argmin}{\ensuremath{\operatorname{Argmin}}\,}

\newcommand{\Fix}{\ensuremath{\operatorname{Fix}}}
\newcommand{\Id}{\ensuremath{\operatorname{Id}}}

\newcommand{\Diag}{\ensuremath{\operatorname{Diag}}}

\newcommand{\minimize}[2]{\ensuremath{\underset{\substack{{#1}}}{\mathrm{minimize}}\;\;#2 }}

{\begin{list}{}{%
\settowidth{\labelwidth}{\textrm{#1~}}%
\setlength{\leftmargin}{\labelwidth+\labelsep}}}
{\end{list}}
\usepackage{amsthm}
\usepackage{aliascnt}
\makeatletter
\def\th@plain{%
	\thm@notefont{}
	\itshape 
}
\def\th@definition{%
	\thm@notefont{}
	\normalfont 
}
\makeatother
\usepackage[capitalize,nameinlink]{cleveref}
\crefname{equation}{}{equations}

\crefname{item}{}{items}
\crefname{enumi}{}{}
\newtheorem{theorem}{Theorem}[section]

\newaliascnt{lemma}{theorem}
\newtheorem{lemma}[lemma]{Lemma}
\aliascntresetthe{lemma}
\crefname{lemma}{Lemma}{Lemmas}
\Crefname{lemma}{Lemma}{Lemmas}

\newaliascnt{lem}{theorem}

\aliascntresetthe{lem}
\crefname{lem}{Lemma}{Lemmas}
\Crefname{lem}{Lemma}{Lemmas}

\newaliascnt{corollary}{theorem}
\newtheorem{corollary}[corollary]{Corollary}
\aliascntresetthe{corollary}
\crefname{corollary}{Corollary}{Corollaries}
\Crefname{corollary}{Corollary}{Corollaries}

\newaliascnt{cor}{theorem}

\aliascntresetthe{cor}
\crefname{cor}{Corollary}{Corollaries}
\Crefname{cor}{Corollary}{Corollaries}

\newaliascnt{proposition}{theorem}
\newtheorem{proposition}[proposition]{Proposition}
\aliascntresetthe{proposition}
\crefname{proposition}{Proposition}{Propositions}
\Crefname{proposition}{Proposition}{Propositions}

\newaliascnt{prop}{theorem}

\aliascntresetthe{prop}
\crefname{prop}{Proposition}{Propositions}
\Crefname{prop}{Proposition}{Propositions}

\newaliascnt{definition}{theorem}

\aliascntresetthe{definition}
\crefname{definition}{Definition}{Definitions}
\Crefname{definition}{Definition}{Definitions}

\newaliascnt{defn}{theorem}

\aliascntresetthe{defn}
\crefname{defn}{Definition}{Definitions}
\Crefname{defn}{Definition}{Definitions}

\newaliascnt{thm}{theorem}

\aliascntresetthe{thm}
\crefname{thm}{Theorem}{Theorems}
\Crefname{thm}{Theorem}{Theorems}

\newaliascnt{example}{theorem}
\newtheorem{example}[example]{Example}
\aliascntresetthe{example}
\crefname{example}{Example}{Examples}
\Crefname{example}{Example}{Examples}

\newaliascnt{ex}{theorem}
\newtheorem{ex}[ex]{Example}
\aliascntresetthe{ex}
\crefname{ex}{Example}{Examples}
\Crefname{ex}{Example}{Examples}

\newaliascnt{fact}{theorem}
\newtheorem{fact}[fact]{Fact}
\aliascntresetthe{fact}
\crefname{fact}{Fact}{Facts}
\Crefname{fact}{Fact}{Facts}

\newaliascnt{remark}{theorem}
\newtheorem{remark}[remark]{Remark}
\aliascntresetthe{remark}
\crefname{remark}{Remark}{Remarks}
\Crefname{remark}{Remark}{Remarks}

\newaliascnt{rem}{theorem}

\aliascntresetthe{rem}
\crefname{rem}{Remark}{Remarks}
\Crefname{rem}{Remark}{Remarks}

\crefname{chapter}{Appendix}{chapters}

\providecommand{\RR}{\mathbb{R}}

\providecommand{\ran}{\operatorname{ran}}

\newcommand{\fix}{\ensuremath{\operatorname{Fix}}}

\providecommand{\Id}{\operatorname{{ Id}}}

\providecommand{\NN}{\mathbb{N}}

\providecommand{\fix}{\operatorname{Fix}}
\providecommand{\ran}{\operatorname{ran}}

\providecommand{\Id}{\operatorname{Id}}

\newcommand{\cran}{\ensuremath{\overline{\operatorname{ran}}\,}}

\providecommand{\RR}{\mathbb{R}}
\providecommand{\NN}{\mathbb{N}}

\definecolor{myblue}{rgb}{0.9,0.9,0.98}

  \newcommand*\mybluebox[1]{%
    \colorbox{myblue}{\hspace{1em}#1\hspace{1em}}}

\allowdisplaybreaks 

\usepackage{relsize}

\begin{document}

\setlength{\abovedisplayskip}{8pt}
\setlength{\belowdisplayskip}{8pt}	
\newsavebox\myboxA
\newsavebox\myboxB
\newlength\mylenA

\newcommand*\xoverline[2][0.75]{%
    \sbox{\myboxA}{$#2$}%
    \setbox\myboxB\null
    \ht\myboxB=\ht\myboxA%
    \dp\myboxB=\dp\myboxA%
    \wd\myboxB=#1\wd\myboxA
    \sbox\myboxB{$\overline{\copy\myboxB}$}
    \setlength\mylenA{\the\wd\myboxA}
    \addtolength\mylenA{-\the\wd\myboxB}%
    \ifdim\wd\myboxB<\wd\myboxA%
       \rlap{\hskip 0.5\mylenA\usebox\myboxB}{\usebox\myboxA}%
    \else
        \hskip -0.5\mylenA\rlap{\usebox\myboxA}{\hskip 0.5\mylenA\usebox\myboxB}%
    \fi}
\makeatother

\makeatletter
\renewcommand*\env@matrix[1][\arraystretch]{%
  \edef\arraystretch{#1}%
  \hskip -\arraycolsep
  \let\@ifnextchar\new@ifnextchar
  \array{*\c@MaxMatrixCols c}}
\makeatother

\providecommand{\wbar}{\xoverline[0.9]{w}}
\providecommand{\ubar}{\xoverline{u}}

\newcommand{\nn}[1]{\ensuremath{\textstyle\mathsmaller{({#1})}}}
\newcommand{\crefpart}[2]{%
  \hyperref[#2]{\namecref{#1}~\labelcref*{#1}~\ref*{#2}}%
}
\newcommand\bigzero{\makebox(0,0){\text{\LARGE0}}}
	

%

\author{
Sedi Bartz\thanks{
Mathematics and Statistics, UMass Lowell, MA 01854, USA. E-mail:
\texttt{sedi\_bartz@uml.edu}.},~
Heinz H.\ Bauschke\thanks{
Mathematics, University
of British Columbia,
Kelowna, B.C.\ V1V~1V7, Canada. E-mail:
\texttt{heinz.bauschke@ubc.ca}.},~
Yuan Gao\thanks{
Mathematics, University
of British Columbia,
Kelowna, B.C.\ V1V~1V7, Canada. 
 E-mail: \texttt{y.gao@ubc.ca}.}~~~and~
 Walaa M. Moursi\thanks{Combinatorics and Optimization, University of Waterloo, Waterloo, Ontario\ N2L~3G1, Canada. 
 E-mail: \texttt{walaa.moursi@uwaterloo.ca}.}
}

\title{\textsf{ 
Strong Convergence of FISTA for Affinely Constrained 
Convex Quadratic Minimization
}
}

\date{April 2, 2026}

\maketitle

\begin{abstract}
In October 2025, research by Bo\c{t}, Fadili, and Nguyen, and by Jang and Ryu, led to the seminal result that 
Beck and Teboulle's FISTA converges \emph{weakly} to a 
minimizer of the sum of two convex functions resolving a long-standing open problem. The first \emph{strong} 
convergence result was obtained in November 2025 
by Moursi, Naguib, Pavlovic, and
Vavasis for affinely constrained 
convex minimization provided certain closedness conditions 
hold. 

In this paper, we prove strong convergence in the 
affine-quadratic case without any
closedness assumption. 
Specializing this to the unconstrained case, 
we obtain 
the strong convergence of 
Nesterov's accelerated gradient method when applied to a convex quadratic objective function. 
\end{abstract}
{ 
\small
\noindent
{\bfseries 2020 Mathematics Subject Classification:}
{Primary 
65K05, 
65K10,
90C20, 
90C25;
Secondary 
47H09.
}

\noindent {\bfseries Keywords:}
accelerated gradient descent, 
FISTA, 
Nesterov acceleration, 
proximal gradient operator, 
accelerated proximal gradient method, 
strong convergence,
weak convergence.

    \section{Introduction}

Throughout, we assume that 
\begin{subequations}\label{intro}
\begin{empheq}[box=\mybluebox]{equation}
\label{introX}
\text{$X$ is a real Hilbert space,} 
\end{empheq}
with inner product $\scal{\cdot}{\cdot}\colon X\times X\to\RR$, and induced norm $\|\cdot\|$.
We also assume that 
\begin{empheq}[box=\mybluebox]{align}
& f : X \to \mathbb{R} \ \text{is convex and $\beta$-smooth, 
where } \beta>0, \label{introf} \\
& g : X \to \left]-\infty, +\infty\right] \ \text{is convex, lower semicontinuous, and proper,} \label{introg} \\
& F := f + g, \label{introF}
\end{empheq}
that 
\begin{empheq}[box=\mybluebox]{equation}
\label{introS}
S :=  \Argmin F\neq\varnothing\quad\text{and}\qquad \mu := \min F(X),
\end{empheq}
and we denote the corresponding 
proximal gradient operator by 
\begin{empheq}[box=\mybluebox]{equation}
\label{introT}
T := \prox_{\frac{1}{\beta} g}
\circ \big(\Id - \tfrac{1}{\beta}\nabla f\big). 
\end{empheq}
\end{subequations}
Our interest lies in finding solutions to the problem 
\begin{equation}
\label{e:theprob}
\minimize{x\in X}\ F(x)=f(x)+g(x).
\end{equation}
One of the best algorithms for solving \cref{e:theprob} 
is Beck and Teboulle's famous FISTA. FISTA requires a 
\emph{parameter sequence} $(t_n)_\nnn$ that satisfies, 
for every $\nnn$, 
\begin{subequations}
\label{a:FISTA}
\begin{empheq}[box=\mybluebox]{align}
t_n&\geq \frac{n+2}{2}\geq 1=t_0, \label{e:t_n1}\\
t^2_n&\geq t^2_{n+1}-t_{n+1} \label{e:t_n2}.
\end{empheq}
Given any starting point $x_0\in X$, FISTA then generates two sequences 
$(x_n)_\nnn,(y_n)_\nnn$ in $X$ iteratively 
\begin{empheq}[box=\mybluebox]{align}
y_0 &:= x_0 \in X, \label{e:standard0}\\
x_{n+1}&:=Ty_n, \label{e:standard1}\\
y_{n+1}&:=x_{n+1}+\frac{t_n-1}{t_{n+1}}(x_{n+1}-x_n). 
\label{e:standard2}
\end{empheq}
\end{subequations}
In the smooth case, $g\equiv 0$, FISTA 
specializes to Nesterov's accelerated gradient descent \cite{Nest83}.
We shall refer to $(x_n,y_n)_\nnn$ as the combined FISTA sequence 
with starting point $x_0$ (for $(f,g)$, with smoothness constant 
$\beta$). 

The central convergence result for FISTA is
the following:

\begin{fact}[Nesterov 1983, Beck-Teboulle, 2009; Bo\c{t}-Fadili-Nguyen and Jang-Ryu, 2025] 
\label{f:weak}
Assume that \cref{intro} holds and 
let $(x_n)_\nnn$ and $(y_{n})_\nnn$ be the FISTA sequences 
generated by \cref{a:FISTA}. 
Then (see {\rm\cite{BT2009, Nest83}})
\begin{equation}
\label{e:value}
F(x_n)-\mu = \mathcal{O}\big(\tfrac{1}{n^{2}}\big)
\end{equation}
and 
(see {\rm\cite{BFN2025,JangRyu2025,BM2026}})
there exists $s\in S$ such that 
\begin{equation}
x_n\weakly s.
\end{equation}
\end{fact}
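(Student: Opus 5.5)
The two assertions of \cref{f:weak} call for different tools. For the rate \cref{e:value} I would use the classical Lyapunov energy approach. For the weak convergence I would use an Opial-type argument built on summability estimates.

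\emph{Step 1 (rate).} Fix $s\in S$. I would start from the standard proximal-gradient descent inequality, valid for all $x,y\in X$ by the $\beta$-smoothness of $f$ and the convexity of $g$:
\[
F(Ty)\le F(x)+\beta\scal{y-Ty}{y-x}-\tfrac{\beta}{2}\|y-Ty\|^2 .
\]
I would apply it at $y=y_n$ twice: once with $x=x_n$ and once with $x=s$. Multiplying the first instance by $t_n(t_n-1)$ and the second by $t_n$, then adding, and using $y_n$ from \cref{e:standard2} together with the auxiliary point $u_n:=x_n+t_{n-1}(x_{n}-x_{n-1})$ (so that $t_n y_n-(t_n-1)x_n$ is expressed via $u_n$), I expect the estimate
\[
E_{n+1}:=t_n^2\big(F(x_{n+1})-\mu\big)+\tfrac{\beta}{2}\|u_{n+1}-s\|^2\le t_{n-1}^2\big(F(x_n)-\mu\big)+\tfrac{\beta}{2}\|u_n-s\|^2 .
\]
In this step \cref{e:t_n2} is used exactly to replace $t_n^2-t_n$ by at most $t_{n-1}^2$. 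Hence $E_n\le E_1$ for all $n$, and \cref{e:t_n1} gives $F(x_{n+1})-\mu\le E_1/t_n^2\le 4E_1/(n+2)^2$.

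\emph{Step 2 (summability).} Keeping the discarded nonnegative terms in the telescoping argument should give
\[
\sum_n (t_{n-1}^2-t_n^2+t_n)\big(F(x_n)-\mu\big)<\infty \quad\text{and}\quad \sum_n t_n\|x_{n+1}-x_n\|^2<\infty .
\]
The second sum comes from a separate energy with $F(x_n)-F(x_{n+1})\ge\frac{\beta}{2}\|x_{n+1}-y_n\|^2$-type estimates. Together they yield boundedness of $(x_n)$, of $(u_n)$, and of $t_n\|x_{n+1}-x_n\|$.

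\emph{Step 3 (Opial).} Every weak cluster point of $(x_n)$ lies in $S$, because $F$ is weakly lsc and $F(x_n)\to\mu$. By Opial's lemma it therefore suffices to show that $\lim\|x_n-s\|$ exists for each $s\in S$. This is the main obstacle, and it is exactly the long-standing issue resolved in \cite{BFN2025,JangRyu2025}. The difficulty is that the natural monotone quantity $E_n(s)$ mixes $\|u_n-s\|^2$ with function values, and $u_n$ is not $x_n$.

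My first attempt would be to show that $\lim_n E_n(s)$ exists for every $s\in S$, which holds by monotonicity. Next I would expand
\[
\|u_n-s\|^2=\|x_n-s\|^2+2t_{n-1}\scal{x_n-x_{n-1}}{x_n-s}+t_{n-1}^2\|x_n-x_{n-1}\|^2 .
\]
The middle term can be rewritten via $h_n:=\frac12\|x_n-s\|^2$ as roughly $t_{n-1}(h_n-h_{n-1})$ plus a summable error. So $p_n:=h_n+t_{n-1}(h_n-h_{n-1})$ converges, up to terms involving $t_{n-1}^2\|x_n-x_{n-1}\|^2$ and $t_{n-1}^2(F(x_n)-\mu)$, whose limits exist independently of $s$ (after a refined energy argument).

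Finally I would apply the elementary lemma: if $h_n+t_{n-1}(h_n-h_{n-1})\to\ell$ with $t_n\to\infty$ and $h_n$ bounded, then $h_n\to\ell$. I would then compare two points $s,s'\in S$ through differences, which cancel the $s$-independent terms. This is the standard route of showing that $\scal{x_n}{s-s'}$ converges, which suffices for Opial's argument.
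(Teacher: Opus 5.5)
The paper gives no proof of this Fact; both the rate and the weak convergence are quoted from the literature, so your outline has to stand on its own. Step~1 is the Beck--Teboulle argument. Your closing idea is the right one: subtract the energies attached to two minimizers so that every $s$-independent term cancels, then argue as in Opial. It is essentially the mechanism of the 2025 proofs the statement cites. Several intermediate steps, however, are wrong or unavailable.

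(i) A minor slip: the auxiliary point is mis-indexed. The relations that make Step~1 telescope are
\[
t_ny_n-(t_n-1)x_n=u_n \quad\text{and}\quad u_n-t_n(y_n-x_{n+1})=u_{n+1},
\]
and they hold for $u_n:=x_{n-1}+t_{n-1}(x_n-x_{n-1})$. They fail for your $u_n=x_n+t_{n-1}(x_n-x_{n-1})$, which is off by $x_n-x_{n-1}$.

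(ii) Step~2 cannot be obtained under the hypotheses of the Fact. For Nesterov's rule (equality in \cref{e:t_n2}) every coefficient $t_{n-1}^2-t_n^2+t_n$ vanishes, so the first series says nothing. The energy $t_{n-1}^2\big(F(x_n)-\mu+\tfrac{\beta}{2}\|x_n-x_{n-1}\|^2\big)$ then gives $\sum_n t_n\|x_{n+1}-x_n\|^2<\infty$ only if $\sum_n t_n(F(x_n)-\mu)<\infty$. The $\mathcal{O}(1/n^2)$ rate does not provide that in this critical regime.

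(iii) You assert that $\lim t_{n-1}^2(F(x_n)-\mu)$ and $\lim t_{n-1}^2\|x_n-x_{n-1}\|^2$ exist ``after a refined energy argument''. This is unsupported, and it is exactly the obstruction that blocks the route through ``$\lim\|x_n-s\|$ exists''. That route should be discarded, not patched.

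None of this is needed. From $E_n(s)\leq E_1(s)\leq\tfrac{\beta}{2}\|x_0-s\|^2$ the sequence $(u_n)$ is bounded. Since $x_n=(1-\tfrac{1}{t_{n-1}})x_{n-1}+\tfrac{1}{t_{n-1}}u_n$ is a convex combination, $(x_n)$ is bounded too.

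For $s,s'\in S$, the difference $E_n(s)-E_n(s')=\beta\scal{u_n}{s'-s}+\tfrac{\beta}{2}(\|s\|^2-\|s'\|^2)$ converges. Hence $a_n:=\scal{x_n}{s'-s}$ obeys $a_{n-1}+t_{n-1}(a_n-a_{n-1})\to\ell$, that is,
\[
a_n-\ell=\Big(1-\tfrac{1}{t_{n-1}}\Big)(a_{n-1}-\ell)+\tfrac{\varepsilon_{n-1}}{t_{n-1}}\quad\text{with } \varepsilon_n\to0 .
\]
Your averaging lemma is misstated here. The relevant hypothesis is $\sum_n 1/t_n=\infty$, not $t_n\to\infty$. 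For example, with $t_n=2^{n+1}$ the sequence $a_n:=\prod_{k<n}(1-1/t_k)$ satisfies $a_{n-1}+t_{n-1}(a_n-a_{n-1})=0$ but does not tend to $0$. Fortunately \cref{e:t_n2} forces $t_{n+1}\leq t_n+1$, so $\sum_n 1/t_n=\infty$ and $a_n\to\ell$.

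Finally, any two weak cluster points $\bar s,\bar s'$ of $(x_n)$ lie in $S$, by Step~1 and the weak lower semicontinuity of $F$. Taking $s=\bar s$ and $s'=\bar s'$ gives $\scal{\bar s}{\bar s'-\bar s}=\scal{\bar s'}{\bar s'-\bar s}$, i.e., $\bar s=\bar s'$. Boundedness then yields $x_n\weakly\bar s$.
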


\begin{fact}[Moursi-Naguib-Pavlovic-Vavasis, 2025]
\label{f:Walaa}
Suppose that $Y$ is a real Hilbert space, 
$A\colon X\to Y$ is continuous, linear, and nonzero, 
$b\in Y$, 
$V$ is a closed affine subspace of $X$, 
$f\colon x\mapsto \thalb\|Ax-b\|^2$, 
and 
$g = \iota_V$.
Let $(x_n)_\nnn,(y_n)_\nnn$ be the FISTA sequences generated 
by \cref{a:FISTA}. 
Then 
\begin{equation}
	\label{e:Walaaconcw}
	x_n\weakly P_{S}x_0
\end{equation}
If, in addition, we have   
\begin{equation}
\label{e:Walaaneeds}
\ran A \text{ is closed and }
V-V + \ker A \text{ is closed,}
\end{equation}
then
\begin{equation}
	\label{e:Walaaconcst}
x_n\to P_{S}x_0.
\end{equation}
\end{fact}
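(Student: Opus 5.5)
The plan has two parts: first identify the weak limit provided by \cref{f:weak}, then upgrade to norm convergence using the $\mathcal{O}(1/n^2)$ rate \cref{e:value} together with a quadratic growth bound that \cref{e:Walaaneeds} makes available.

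\emph{Setup and invariant.} Write $V = v + W$, where $W := V-V$ is a closed linear subspace, and put $K := W\cap\ker A$. Since $P_V z = v + P_W(z-v)$, we have
\begin{equation*}
Ty = v + P_W\big(y - v - \tfrac{1}{\beta}A^*(Ay-b)\big).
\end{equation*}
Because $K\subseteq W$, we have $P_KP_W = P_K$. Because $K\subseteq\ker A$, we have $P_KA^*=0$. Hence $P_K Ty = P_K v + P_K(y-v) = P_K y$. It follows that $P_K x_{n+1} = P_K y_n$. Moreover $y_{n+1}$ is an affine combination of $x_{n+1}$ and $x_n$, so induction yields $P_K x_n = P_K y_n = P_K x_0$ for every $n$.

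Next I describe $S$. It is nonempty by \cref{intro}; for $s\in S$ and $x\in V$ we have $x-s\in W$, and since $\scal{A^*(As-b)}{w}=0$ for all $w\in W$, we get $f(x)-\mu = \tfrac12\|A(x-s)\|^2$. Therefore $S = s+K$. Consequently $P_Sx_0$ is the unique $s\in S$ with $x_0 - s\perp K$, that is, with $P_K s = P_K x_0$.

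\emph{Weak convergence.} By \cref{f:weak}, $x_n\weakly \bar s$ for some $\bar s\in S$. Since $P_K$ is weakly continuous, $P_K\bar s = \lim P_K x_n = P_K x_0$. Hence $\bar s = P_Sx_0$, which is \cref{e:Walaaconcw}.

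\emph{Strong convergence.} Put $s := P_Sx_0$ and $U := W\cap K^\perp$. For $n\ge1$ we have $x_n\in V$, so $x_n - s\in W$ and $P_K(x_n-s)=0$; thus $x_n-s\in U$. The restriction $A|_U$ is injective. The key claim is that \cref{e:Walaaneeds} forces $A(W)$ to be closed. If so, the open mapping theorem applied to $A|_U\colon U\to A(W)$ (note $A(U)=A(W)$) gives $c>0$ with $\|Au\|\ge c\|u\|$ for all $u\in U$. Then
\begin{equation*}
\tfrac{c^2}{2}\|x_n - s\|^2 \le \tfrac12\|A(x_n-s)\|^2 = F(x_n)-\mu = \mathcal{O}\big(\tfrac{1}{n^2}\big)
\end{equation*}
by \cref{e:value}. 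This gives \cref{e:Walaaconcst}, and even an $\mathcal{O}(1/n)$ rate.

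\emph{Main obstacle: the closedness claim.} I expect this step to be the delicate one. I would argue as follows. Since $\ran A$ is closed, $A$ induces a topological isomorphism $\widetilde A\colon (\ker A)^\perp\to\ran A$. Moreover $A(W) = \widetilde A\big(P_{(\ker A)^\perp}W\big)$. So it suffices to show that $P_{(\ker A)^\perp}(W)$ is closed. Now $P_{(\ker A)^\perp}(W) = (W+\ker A)\cap(\ker A)^\perp$. This set is closed because $W+\ker A$ is closed by \cref{e:Walaaneeds} and $(\ker A)^\perp$ is closed. This completes the claim, and with it the proof.
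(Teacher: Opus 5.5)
Your proof is correct, but the paper gives no proof to compare it with: the statement is quoted as a Fact from \cite{MNPV}. The remark after \cref{t:ex} only records that, there, $x_n\to P_Sx_0$ follows from $F(x_n)\to\mu$ together with the closedness conditions. That is consistent with the structure of your argument, which has two ingredients. The first is the invariant $P_Kx_n=P_Kx_0$ with $K=(V-V)\cap\ker A$. The second is the error bound $\|Au\|\geq c\|u\|$ on $(V-V)\cap K^\perp$. You derive it correctly from closedness of $A(V-V)$, using the isomorphism $(\ker A)^\perp\to\ran A$ and the identity $P_{(\ker A)^\perp}(V-V)=\big((V-V)+\ker A\big)\cap(\ker A)^\perp$.

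The paper's own route to the strong conclusion, \cref{t:ex} via \cref{t:affinenew} and \cref{t:linear}, is entirely different. It writes $T=L+q$ and shifts to the linear case. It then uses density of $\ran(\Id-L)$ in $(\Fix L)^\perp$ (\cref{f:2.6}) to choose $u_0$ with $x_0-P_{\Fix T}x_0\approx(\Id-T)u_0$. Linearity then splits $x_n-P_{\Fix T}x_0$ into two pieces: a FISTA sequence started at a point of norm $<\varepsilon$, kept small by the Fej\'er-type bound of \cref{f:2}, and $(\Id-T)u_n\to 0$ by \cref{l:affine}.

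The trade-off is as follows. Your argument is quantitative: it gives $\|x_n-P_Sx_0\|=\mathcal{O}(1/n)$, and it works for any parameter sequence ensuring $F(x_n)\to\mu$. It cannot dispense with closedness, however, since the error bound on $(V-V)\cap K^\perp$ is equivalent to $A(V-V)$ being closed. The paper's argument is purely qualitative and tied to the specific FISTA parameter rules through \cref{f:2.5} and \cref{f:2}. In exchange, it needs no closedness at all, and so yields both conclusions of this Fact unconditionally.

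A small aside: for the weak part you do not need the deep result \cref{f:weak}. Boundedness of $(x_n)$ (\cref{f:2}), $F(x_n)\to\mu$, and weak lower semicontinuity of $F$ already place every weak cluster point in $S$. Your invariant then forces each of them to be $P_Sx_0$.
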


The goal of this paper is to present a \emph{strong
convergence result for FISTA 
that does not require any closedness assumption such 
as \cref{e:Walaaneeds}.}
The resulting theorem (see \cref{t:ex}) 
appears to be new even 
in the classical case of Nesterov acceleration!

The remainder of the paper is organized as follows. 
In \cref{s:aux}, we collect some results that will 
make the proof of the main result more pleasant. 
The main results are then presented in 
\cref{s:main}. 
The final \cref{s:ex} focuses on
convex quadratic minimization with an affine constraint. 

Our notation is standard and follows largely \cite{BC2017} 
and \cite{Beck}.

\section{Auxiliary results}

In this section, we collect a few facts and results that 
will be used in the proofs of the main results. 

\label{s:aux}
    
\begin{fact}
\label{f:2}
Assume that \cref{intro} holds, and 
let $(x_n)_\nnn,(y_n)_\nnn$ be generated 
by \cref{a:FISTA}. 
Then the following hold: 
\begin{enumerate}
\item  
\label{f:2.4} 
$ \Fix T=S$. 
\item  
\label{f:2ne} 
$T$ is nonexpansive. 
\item 
\label{f:2.2}
$(\forall s\in S)(\forall\nnn)$
$\|x_n-s\|\leq\|x_0-s\|$. 
\item 
\label{f:2.3}
$x_n-y_n\to 0$. 
\end{enumerate}
\end{fact}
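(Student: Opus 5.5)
The four items are of different depth. Items \ref{f:2.4} and \ref{f:2ne} are standard properties of the proximal gradient operator. Items \ref{f:2.2} and \ref{f:2.3} both come from the classical Beck--Teboulle Lyapunov (energy) inequality for FISTA, combined with the parameter conditions \cref{e:t_n1}--\cref{e:t_n2}.

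For \ref{f:2.4}, I would write $x=Tx$ as $x=\prox_{\frac1\beta g}(x-\frac1\beta\nabla f(x))$. This is equivalent to $-\nabla f(x)\in\partial g(x)$. Since $f$ is real-valued and differentiable, the sum rule gives $\partial F=\nabla f+\partial g$, so the inclusion says $0\in\partial F(x)$, i.e., $x\in S$.

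For \ref{f:2ne}, the Baillon--Haddad theorem makes $\frac1\beta\nabla f$ firmly nonexpansive, hence so is $\Id-\frac1\beta\nabla f$. The operator $\prox_{\frac1\beta g}$ is firmly nonexpansive as well. A composition of nonexpansive maps is nonexpansive.

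For \ref{f:2.2}, fix $s\in S$ and set $t_{-1}:=1$ and $u_n:=t_{n-1}x_n-(t_{n-1}-1)x_{n-1}-s$ (with $x_{-1}:=x_0$, so $u_0=x_0-s$).
\begin{enumerate}
\item I would invoke the descent lemma/prox inequality for $T$: for all $z,y$,
\[
F(Ty)\le F(z)+\tfrac\beta2\|z-y\|^2-\tfrac\beta2\|z-Ty\|^2 .
\]
\item Apply it with $y=y_n$ at the points $z=x_n$ and $z=s$. Then take the convex combination with weights $1-1/t_n$ and $1/t_n$, multiply by $t_n^2$, and use $t_n^2-t_n\le t_{n-1}^2$ from \cref{e:t_n2}.
\item This yields the standard energy inequality
\[
\tfrac{2}{\beta}t_n^2\bigl(F(x_{n+1})-\mu\bigr)+\|u_{n+1}\|^2\le \tfrac2\beta t_{n-1}^2\bigl(F(x_n)-\mu\bigr)+\|u_n\|^2 .
\]
The identity $t_ny_n=t_nx_n+(t_{n-1}-1)(x_n-x_{n-1})$ from \cref{e:standard2} is what makes the vectors telescope into the $u_n$.
\item Iterating gives $\|u_n\|\le\|x_0-s\|$ for all $n$.
\item Rewriting the definition of $u_n$ gives $x_n-s=\frac1{t_{n-1}}u_n+\bigl(1-\frac1{t_{n-1}}\bigr)(x_{n-1}-s)$. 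Since $t_{n-1}\ge1$ this is a convex combination, so induction gives $\|x_n-s\|\le\|x_0-s\|$.
\end{enumerate}

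For \ref{f:2.3}, note that $u_n=(x_n-s)+(t_{n-1}-1)(x_n-x_{n-1})$. Hence
\[
(t_{n-1}-1)\|x_n-x_{n-1}\|\le\|u_n\|+\|x_n-s\|\le2\|x_0-s\| .
\]
By \cref{e:standard2}, $\|y_n-x_n\|=\frac{t_{n-1}-1}{t_n}\|x_n-x_{n-1}\|\le \frac{2\|x_0-s\|}{t_n}$. Since $t_n\ge\frac{n+2}{2}\to\infty$, this tends to $0$.

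The main obstacle is step 2 of \ref{f:2.2}: the bookkeeping in the Lyapunov inequality, specifically checking that the weaker condition $t_n^2-t_n\le t_{n-1}^2$ (an inequality rather than the usual equality) still allows the function-value terms to be dropped monotonically. I would check this carefully using $F(x_n)-\mu\ge0$.
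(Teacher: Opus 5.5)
Your proposal is correct, but it takes a genuinely different route from the paper, which does not prove this statement at all. The paper quotes the fixed-point identity and nonexpansiveness from \cite[Proposition~26.1(iv)]{BC2017}, the bound $\|x_n-s\|\le\|x_0-s\|$ from \cite[Theorem~3.4(i)]{MNPV}, and $x_n-y_n\to0$ from \cite[Theorem~6.1]{BM2026}. Your arguments for the first two items are the standard ones: the optimality condition with the sum rule, and Baillon--Haddad with firm nonexpansiveness of the prox. For the last two items you derive everything from a single Beck--Teboulle Lyapunov inequality. This shows exactly where the relaxed condition $t_{n+1}^2-t_{n+1}\le t_n^2$ (together with $F(x_n)\ge\mu$) and the normalization $t_0=1$ enter. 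It also gives more than is asked, namely $(t_{n-1}-1)\|x_n-x_{n-1}\|\le 2\|x_0-s\|$ and the rate $\|x_n-y_n\|\le 2\|x_0-s\|/t_n\le 4\|x_0-s\|/(n+2)$. In short, the citation route buys brevity, and yours buys self-containedness and quantitative rates.

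One bookkeeping point in your step~4 needs fixing. With your convention $t_{-1}:=1$, the energy at $n=0$ is $\tfrac{2}{\beta}(F(x_0)-\mu)+\|x_0-s\|^2$. Iterating from $n=0$ therefore only yields $\|u_n\|^2\le\tfrac{2}{\beta}(F(x_0)-\mu)+\|x_0-s\|^2$, which is not the claimed bound. This bound is even $+\infty$ when $x_0\notin\operatorname{dom}g$, e.g.\ $g=\iota_V$ with $x_0\notin V$, which the paper allows.

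Treat $n=0$ separately. Since $t_0=1$, use only the prox-grad inequality with $z=s$ and $y=y_0=x_0$. Because $u_1=x_1-s$, this gives $\tfrac{2}{\beta}(F(x_1)-\mu)+\|u_1\|^2\le\|x_0-s\|^2$. Then run the monotone energy argument for $n\ge1$, where every $F(x_n)$ is finite. With this repair, both $\|u_n\|\le\|x_0-s\|$ and your deduction of $x_n-y_n\to0$ go through as written.
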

\begin{proof}
\cref{f:2.4}: 
This follows, e.g., 
from \cite[Proposition~26.1(iv)(a)]{BC2017}. 
\cref{f:2ne}: 
This follows, e.g., 
from \cite[Proposition~26.1(iv)(d)]{BC2017}. 
\cref{f:2.2}: See \cite[Theorem~3.4(i)]{MNPV}. 
\cref{f:2.3}: 
See \cite[Theorem~6.1]{BM2026}. 
\end{proof}

    \begin{lemma}
        \label{f:2.5}
        Let $(t_n)_\nnn$ satisfy \cref{e:t_n1} and \cref{e:t_n2}. Then
        \begin{align*}
            \frac{t_n-1}{t_{n+1}}\to 1. 
        \end{align*}
    \end{lemma}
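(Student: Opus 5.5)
From \cref{e:t_n2} we get $t_{n+1}^2 - t_{n+1} - t_n^2 \le 0$. Solving this quadratic in $t_{n+1}$ gives an upper bound,
\begin{equation*}
t_{n+1}\le \tfrac{1}{2}\big(1+\sqrt{1+4t_n^2}\big)\le \tfrac12\big(1+1+2t_n\big)=1+t_n,
\end{equation*}
using $\sqrt{1+4t_n^2}\le 1+2t_n$. Hence $t_{n+1}-t_n\le 1$ for all $n$. The plan is to combine this increment bound with the linear lower bound $t_n\ge (n+2)/2$ from \cref{e:t_n1}, which forces $t_n\to\infty$.

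For the upper bound on the ratio we only need $t_{n+1}>0$, which holds since $t_{n+1}\ge 1$. Then
\begin{equation*}
\frac{t_n-1}{t_{n+1}}\le \frac{t_{n+1}-1}{t_{n+1}}<1 ,
\end{equation*}
provided $t_n\le t_{n+1}$. Monotonicity of $(t_n)$ is not assumed, so I would avoid it. Instead I use \cref{e:t_n2} directly: it gives $t_{n+1}^2-t_{n+1}\le t_n^2$, and therefore
\begin{equation*}
t_{n+1}\ \ge\ t_{n+1}-\tfrac12\ \ge\ \sqrt{t_{n+1}^2-t_{n+1}}\ \ge\ \ldots
\end{equation*}
This chain points the wrong way, so I abandon it. The cleaner route is to note that $t_{n+1}\ge \tfrac{n+3}{2}$ and $t_n\le t_0+n=n+1$ (by induction from $t_{k+1}\le t_k+1$). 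These bounds alone do not give a limit, so the two-sided estimate has to come from elsewhere.

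The real content is a lower bound for $t_{n+1}$ relative to $t_n$. Rewrite
\begin{equation*}
\frac{t_n-1}{t_{n+1}}=\frac{t_n}{t_{n+1}}-\frac{1}{t_{n+1}}, \qquad \frac1{t_{n+1}}\to0 .
\end{equation*}
It therefore suffices to show $t_n/t_{n+1}\to1$. The lower bound comes from the increment estimate: $t_{n+1}/t_n\le 1+1/t_n\to1$, so $\liminf t_n/t_{n+1}\ge 1$.

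For the upper side we need $t_{n+1}\ge t_n-o(t_n)$, and here is the main obstacle: \cref{e:t_n2} bounds $t_{n+1}$ only from above. Without an additional condition, $t_n$ could in principle drop sharply. The only lower control is $t_{n+1}\ge (n+3)/2$, while $t_n\le n+1$. That gives $\limsup t_n/t_{n+1}\le 2$, not $1$.

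So I would examine whether a drop is consistent with the hypotheses. Take $t_n$ near $n+1$ at some large $n$ and then $t_{n+1}=(n+3)/2$. Checking \cref{e:t_n2}: $t_{n+1}^2-t_{n+1}\le t_n^2$ is clearly satisfied. This suggests the hypotheses alone may not suffice.

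My plan is therefore to first try to construct such a counterexample, namely a sequence oscillating between the lines $(n+2)/2$ and roughly $n$. If it is valid, I would return to the paper's precise hypotheses to look for an implicit extra one, such as equality in \cref{e:t_n2} or monotonicity. Under equality, $t_{n+1}=\tfrac12(1+\sqrt{1+4t_n^2})\ge t_n$, and both bounds above immediately give the limit $1$.
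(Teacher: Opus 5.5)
Your $\varliminf$ half is fine. The bound $t_{n+1}\le t_n+1$ gives $\varliminf t_n/t_{n+1}\ge 1$; the paper gets the same from $t_n^2/t_{n+1}^2\ge 1-1/t_{n+1}$. But the proposal never proves the $\varlimsup$ half, and your suspicion that the lemma may fail is wrong.

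The gap comes from the crude estimate $\sqrt{1+4t_n^2}\le 1+2t_n$, which throws away exactly the information you need. The sharper estimate is
\[
t_{n+1}\le \tfrac12\big(1+\sqrt{1+4t_n^2}\big)\le \tfrac12\big(1+2t_n+\tfrac{1}{4t_n}\big)=t_n+\tfrac12+\tfrac{1}{8t_n}.
\]
Starting from $t_0=1$ and using $t_k\ge (k+2)/2$, the sequence therefore grows by essentially $\tfrac12$ per step, so $t_n\le \tfrac n2+O(\log n)$. The paper packages this as the induction $t_n\le \tfrac12\big(n+1+\sqrt{n+1}\big)$; the inductive step reduces to $1+(n+1+\sqrt{n+1})^2<(n+1+\sqrt{n+2})^2$. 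Together with $t_{n+1}\ge \tfrac{n+3}{2}$ from \cref{e:t_n1}, this gives
\[
\frac{t_n-1}{t_{n+1}}\le \frac{n-1+\sqrt{n+1}}{n+3}\to 1 .
\]
No monotonicity and no equality in \cref{e:t_n2} is needed.

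Consequently, the counterexample you planned to build cannot exist. It needs $t_n$ near $n+1$ for large $n$, but every admissible sequence is trapped between $\tfrac{n+2}{2}$ and $\tfrac{n+1+\sqrt{n+1}}{2}$. The sequence can drop, but only by $O(\sqrt{n})$, which vanishes in the ratio. The spurious factor $2$ in your $\varlimsup$ estimate comes entirely from using the loose bound $t_n\le n+1$ instead of an upper bound of order $n/2$.
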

    \begin{proof}
The proof is analogous to that of \cite[Lemma~4.3(ii)]{BBW}; however, 
the assumptions on the parameter sequence are slightly different 
in that paper which is why we include the details for completeness. 
    It follows from \cref{e:t_n1} that
\begin{equation}
\label{e:260316a}
t_n\to +\infty.
\end{equation}
Rewriting \cref{e:t_n2} and combining with \cref{e:260316a}, 
we obtain $t_n^2/t_{n+1}^2\geq 1-1/t_{n+1}\to 1$. 
The positivity of the parameter sequence now yields 
$\varliminf_{n\to\infty} {t_n/t_{n+1}} \geq 1$. Recalling \cref{e:260316a} again, we obtain 
\begin{align}
\label{e:260316c}
\varliminf_{n\to \infty}\frac{t_n-1}{t_{n+1}}\geq 1. 
\end{align}

We now show by induction that 
\begin{equation}
\label{e:260316b}
(\forall \nnn)\quad 
t_n\leq \frac{n+1+\sqrt{n+1}}{2}. 
\end{equation}
When $n=0$, then \cref{e:260316b} states $1\leq 1$. 
Now assume that \cref{e:260316b} holds for some $\nnn$. 
Combining \cref{e:t_n2} with the inductive hypothesis, we deduce 
\begin{equation}
\label{e:l3.11}
 t_{n+1}\leq \frac{1+\sqrt{1+4t_n^2}}{2}\leq \frac{1+\sqrt{1+(n+1+\sqrt{n+1})^2}}{2}. 
\end{equation}
On the other hand, because 
$
   \big(n+1+\sqrt{n+2}\big)^2-\big(1+(n+1+\sqrt{n+1})^2\big)
   =2(n+1)\big(\sqrt{n+2}-\sqrt{n+1}\big)>0,
$
we obtain 
\begin{equation}
\label{e:l3.12}
\sqrt{1+(n+1+\sqrt{n+1})^2}<n+1+\sqrt{n+2}. 
\end{equation}
Combining \cref{e:l3.11} and \cref{e:l3.12}, we get 
 $t_{n+1}<(n+2+\sqrt{n+2})/2$, 
which completes the inductive proof of \cref{e:260316b}.
Next, \cref{e:260316b} and \cref{e:t_n1} yield
\begin{equation}
t_n-1 \leq \frac{n-1+\sqrt{n+1}}{2}
\;\;\text{and}\;\;
\frac{1}{t_{n+1}} \leq \frac{2}{n+3}.
\end{equation}
Therefore, 
\begin{align}
\frac{t_n-1}{t_{n+1}}\leq \frac{n-1+\sqrt{n+1}}{n+3} \to 1
\end{align}
and so 
\begin{align}
\label{e:260316d}
\varlimsup_{n\to \infty}\frac{t_n-1}{t_{n+1}}\leq 1.
\end{align}
Combining \cref{e:260316c} with \cref{e:260316d} yields the result.
\end{proof}

\begin{lemma}
\label{l:affine}
Assume that \cref{intro} holds, and 
let $(x_n)_\nnn,(y_n)_\nnn$ be generated 
by \cref{a:FISTA}. 
Then the following hold:
\begin{enumerate}
\item 
\label{l:affine1}
$x_n-x_{n+1}\to 0$.
\item 
\label{l:affine2}
$y_n-y_{n+1}\to 0$.
\item 
\label{l:affine3}
$x_n-Tx_n\to 0$. 
\end{enumerate}
\end{lemma}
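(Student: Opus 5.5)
The plan is to derive all three items from \cref{f:2}\cref{f:2.3} ($x_n-y_n\to 0$), \cref{f:2.5}, and the nonexpansiveness of $T$ in \cref{f:2}\cref{f:2ne}, using only the defining recursion \cref{e:standard2}.

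For \cref{l:affine1}, I would set $\theta_n := (t_n-1)/t_{n+1}$, so that \cref{e:standard2}, with the index shifted, reads $y_{n+1}-x_{n+1} = \theta_n(x_{n+1}-x_n)$ for every $\nnn$. By \cref{f:2.5}, $\theta_n\to 1$. Hence there is $N$ such that $\theta_n\geq \tfrac12$ for all $n\geq N$. The early terms may vanish (for instance $\theta_0=0$ when $t_1=1$), but they are irrelevant for a limit statement. For $n\geq N$ we then get
\begin{equation*}
\|x_{n+1}-x_n\| = \frac{\|y_{n+1}-x_{n+1}\|}{\theta_n}\leq 2\|y_{n+1}-x_{n+1}\|\to 0,
\end{equation*}
by \cref{f:2}\cref{f:2.3}.

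For \cref{l:affine2}, I would use the telescoping split
\begin{equation*}
y_n-y_{n+1} = (y_n-x_n)+(x_n-x_{n+1})+(x_{n+1}-y_{n+1}).
\end{equation*}
Each of the three terms tends to $0$, by \cref{f:2}\cref{f:2.3} and \cref{l:affine1}, so the sum does as well.

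For \cref{l:affine3}, I would write $x_n-Tx_n = (x_n-x_{n+1})+(Ty_n-Tx_n)$, using $x_{n+1}=Ty_n$ from \cref{e:standard1}. Nonexpansiveness of $T$ (\cref{f:2}\cref{f:2ne}) gives $\|Ty_n-Tx_n\|\leq\|y_n-x_n\|\to 0$. Combined with \cref{l:affine1}, this shows $x_n-Tx_n\to 0$.

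The only genuinely delicate point is \cref{l:affine1}: we must divide by $\theta_n$, which can be zero for small $n$. This is exactly why \cref{f:2.5} is needed, since it guarantees that $\theta_n$ stays bounded away from $0$ for all large $n$. The remaining steps are routine triangle-inequality arguments.
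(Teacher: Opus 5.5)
Your proof is correct. Items (i) and (ii) follow the paper's argument essentially verbatim. In (i) the paper legitimizes the division by noting $t_n\geq\tfrac32$, so $t_n-1\geq\tfrac12$, for $n\geq1$, and then uses $t_{n+1}/(t_n-1)\to1$; you use the eventual bound $\theta_n\geq\tfrac12$ instead, and both work. Your parenthetical is slightly off: \cref{e:t_n1} forces $t_1\geq\tfrac32$, so $t_1=1$ cannot occur. Rather, $\theta_0=0$ always because $t_0=1$, and $\theta_n>0$ for every $n\geq1$. This does not affect the argument.

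For (iii) you take a different and shorter route. The paper starts from $\|x_{n+1}-Tx_{n+1}\|=\|Ty_n-Tx_{n+1}\|$ and substitutes $x_{n+1}=y_{n+1}-\theta_n(x_{n+1}-x_n)$ from the momentum step. It then bounds the result by $\|y_n-y_{n+1}\|+\theta_n\|x_{n+1}-x_n\|$, which uses (i), (ii), and \cref{f:2.5} again. Your splitting $x_n-Tx_n=(x_n-x_{n+1})+(Ty_n-Tx_n)$ gives directly
\begin{equation*}
\|x_n-Tx_n\|\leq\|x_n-x_{n+1}\|+\|y_n-x_n\|,
\end{equation*}
using only (i), nonexpansiveness of $T$, and \cref{f:2}\cref{f:2.3}. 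In particular, (iii) does not need (ii) in your version. Equivalently, the paper's intermediate quantity $\|y_n-x_{n+1}\|$ could have been split at once as $\|y_n-x_n\|+\|x_n-x_{n+1}\|$. The paper's detour through $y_{n+1}$ gains nothing here, so your argument is a strict simplification.
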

\begin{proof}
Let $n\geq 1$. 
\cref{l:affine1}: 
By \cref{e:t_n1}, $t_n\geq 3/2$ and so $t_n-1\geq 1/2>0$. 
By \cref{e:standard2}, 
$y_{n+1}-x_{n+1} = (t_n-1)/t_{n+1}(x_{n+1}-x_n)$. 
Thus
$x_{n+1}-x_n = t_{n+1}/(t_n-1)(y_{n+1}-x_{n+1})$ is well-defined. 
Taking the norm, and recalling \cref{f:2.5} and \cref{f:2}\cref{f:2.3}, 
we get 
\begin{equation}
0 \leq \|x_{n+1}-x_n\| = \frac{t_{n+1}}{t_n-1}\|y_{n+1}-x_{n+1}\| 
\to 1\cdot 0 = 0. 
\end{equation}
Hence $x_{n+1}-x_n\to 0$, as claimed. 

\cref{l:affine2}: 
Because 
$y_n-y_{n+1} = (y_n-x_n)+(x_n-x_{n+1}) + (x_{n+1}-y_{n+1})$, 
the conclusion follows by combining \cref{f:2}\cref{f:2.3} with 
\cref{l:affine1}. 

\cref{l:affine3}: 
Indeed, we estimate
\begin{align*}
\|x_{n+1}-Tx_{n+1}\|&=\|Ty_n-Tx_{n+1}\| \tag{by \cref{e:standard1}} \\
&=\left\|Ty_n-T\left(y_{n+1}-\frac{t_n-1}{t_{n+1}}(x_{n+1}-x_n)\right)\right\|\tag{by \cref{e:standard2}}\\
&\leq \left\|y_n-y_{n+1}+\frac{t_n-1}{t_{n+1}}(x_{n+1}-x_n)\right\| 
\tag{by \cref{f:2}\cref{f:2ne}}\\
&\leq \|y_n-y_{n+1}\|+\frac{t_n-1}{t_{n+1}}\|x_{n+1}-x_n\|
\tag{by the triangle inequality and \cref{e:t_n1}}\\
&\to 0 + 1\cdot 0 =0, \tag{by \cref{l:affine1}, \cref{l:affine2}, and \cref{f:2.5}}
\end{align*}
which yields $x_n-Tx_n\to 0$. 
\end{proof}

\begin{fact}
\label{f:2.6}
Let $L\colon X \to X$ be linear and nonexpansive, 
and let $q\in X$. 
Set $T:=L+q$ and suppose that $\Fix T\neq \varnothing$. 
Then there exists $a\in \Fix T$ such that 
the following hold: 
\begin{enumerate}
\item 
\label{f:2.6i} 
$(\Fix L)^\perp=\cran (\Id-L)$. 
\item 
\label{f:2.6ii}
$\Fix T=a+\Fix L$;
\item 
\label{f:2.6iii}
$(\forall x\in X)\ P_{\Fix T}x=a+P_{\Fix L}(x-a)$. 
\end{enumerate}
\end{fact}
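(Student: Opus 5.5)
The plan is to prove the three items in the order \cref{f:2.6ii}, \cref{f:2.6i}, \cref{f:2.6iii}. Since $\Fix T\neq\varnothing$, we fix once and for all some $a\in\Fix T$, so that $a=La+q$. Every $x\in X$ can be written as $x=a+z$ with $z:=x-a$. Using linearity,
\begin{equation*}
Tx = L(a+z)+q = (La+q)+Lz = a+Lz.
\end{equation*}
Consequently $Tx=x$ if and only if $Lz=z$. This gives $\Fix T=a+\Fix L$, which is \cref{f:2.6ii}. Note that $\Fix L=\ker(\Id-L)$ is a closed linear subspace, because $L$ is continuous: a linear nonexpansive map is bounded.

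For \cref{f:2.6i}, the first step is to show $\Fix L=\Fix L^*$. Since $\|L^*\|=\|L\|\leq 1$, the adjoint is also nonexpansive. Suppose $Lx=x$. Then
\begin{equation*}
\|L^*x-x\|^2=\|L^*x\|^2-2\scal{L^*x}{x}+\|x\|^2=\|L^*x\|^2-2\scal{x}{Lx}+\|x\|^2\leq \|x\|^2-2\|x\|^2+\|x\|^2=0,
\end{equation*}
so $L^*x=x$. The reverse inclusion follows by symmetry, using $L^{**}=L$. The second step is the standard identity $\big(\ran(\Id-L)\big)^\perp=\ker(\Id-L)^*=\ker(\Id-L^*)=\Fix L^*=\Fix L$. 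Taking orthogonal complements and using that the double complement of a subspace is its closure gives $(\Fix L)^\perp=\cran(\Id-L)$. This step, the equality $\Fix L=\Fix L^*$, is the only one requiring a genuine idea; everything else is bookkeeping.

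For \cref{f:2.6iii}, we use translation of projections onto affine sets: for a closed convex set $C$, $P_{a+C}x=a+P_C(x-a)$. To verify this, note that $p=a+P_C(x-a)$ lies in $a+C$. Moreover, for every $c\in C$,
\begin{equation*}
\|x-p\|=\|(x-a)-P_C(x-a)\|\leq\|(x-a)-c\|=\|x-(a+c)\|,
\end{equation*}
so $p$ is a nearest point of $a+C$ to $x$. Applying this with $C=\Fix L$, which is closed and convex, together with \cref{f:2.6ii}, yields \cref{f:2.6iii}.

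In the paper this result is labelled as a fact and could instead simply be cited, for instance from \cite{BC2017}. The argument above is nonetheless self-contained.
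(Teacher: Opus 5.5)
Your proof is correct and takes the route the paper indicates. For (i), the paper's proof points to the Riesz--Nagy identity $\Fix L=\Fix L^*$, which you verify directly and combine with $(\ran(\Id-L))^\perp=\ker(\Id-L^*)$; for (ii) and (iii), it cites a lemma from the literature for which your translation argument ($Tx=a+L(x-a)$ and $P_{a+C}x=a+P_C(x-a)$) is the standard direct proof, and your argument in fact shows these hold for \emph{every} $a\in\Fix T$.
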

\begin{proof}
\cref{f:2.6i}: This is implicit in \cite{RN} 
where it is shown that $\Fix L = \Fix L^*$ 
(see also \cite[Section~144]{RNbook} and 
\cite[Proposition~2.1]{BBG}). 
\cref{f:2.6ii}\&\cref{f:2.6iii}: See \cite[Lemma~3.2(i)\&(ii)]{BLM2017}. 
\end{proof}

\section{Abstract Main Results: Linear and Affine}

\label{s:main}

We are now ready for our two abstract main results. 
First, we obtain the linear version: 

\begin{theorem}[Linear Main Result]
\label{t:linear}
Assume that \cref{intro} holds, and 
let $(x_n)_\nnn,(y_n)_\nnn$ be generated 
by \cref{a:FISTA}. 
Furthermore, suppose that $T$ is \emph{linear}. 
Then 
\begin{align}
x_n\to P_{\Fix T}x_0
\;\;\text{and}\;\;
y_n\to P_{\Fix T}x_0. 
\end{align}
\end{theorem}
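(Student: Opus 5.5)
Since $T$ is linear, $T=L$ with $q=0$. Hence $0\in\Fix T$ and $\Fix T=\Fix L$ is a closed linear subspace; by \cref{f:2}\cref{f:2.4} it equals $S$. Set $P:=P_{\Fix T}$ and $Q:=\Id-P$, so that $Q$ is the projector onto $(\Fix T)^\perp=\cran(\Id-T)$ by \cref{f:2.6}\cref{f:2.6i}. The plan is to split the iterates as $x_n=Px_n+Qx_n$ and to show separately that $Px_n=Px_0$ for every $n$ and that $Qx_n\to 0$ strongly.

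\emph{Step 1: the $P$-component is invariant.} Since $T$ is nonexpansive and linear, $\Fix T=\Fix T^*$ (this is the content behind \cref{f:2.6}\cref{f:2.6i}). Hence $PT=TP=P$: for $u\in\Fix T$ we have $\scal{Tx}{u}=\scal{x}{T^*u}=\scal{x}{u}$, so $P(Tx)=Px$. Applying $P$ to \cref{e:standard1} and \cref{e:standard2} then gives $Px_{n+1}=Py_n$ and $Py_{n+1}=Px_{n+1}+\frac{t_n-1}{t_{n+1}}(Px_{n+1}-Px_n)$. By induction, using $Py_0=Px_0$, we get $Px_n=Py_n=Px_0$ for all $n$. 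So the candidate limit $Px_0=P_{\Fix T}x_0$ is forced.

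\emph{Step 2: the $Q$-component tends to zero, which is the main obstacle.} We know $x_n-Tx_n\to 0$ by \cref{l:affine}\cref{l:affine3}. We also know $(x_n)$ is bounded by \cref{f:2}\cref{f:2.2} and $x_n\weakly s\in S$ by \cref{f:weak}. Weak convergence alone does not give norm convergence, so instead I would use a Fejér-type quantity directly. Take $s:=Px_0\in S$. By \cref{f:2}\cref{f:2.2} applied with starting point $x_m$, the sequence $\|x_n-s\|$ is nonincreasing in a suitable sense. Rather than restarting, I would use the decomposition $\|x_n-Px_0\|^2=\|Qx_n\|^2$ (by Step 1) and aim to show $Qx_n\to 0$. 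Given $\varepsilon>0$, choose $z\in\ran(\Id-T)$, say $z=w-Tw$, with $\|Qx_0-z\|\le\varepsilon$; this uses the density from \cref{f:2.6i}. FISTA applied to a linear $T$ is a linear recursion in $x_0$: $x_n=A_nx_0$ with $A_n$ linear, and \cref{f:2}\cref{f:2.2} with $s=0$ gives $\|A_n\|\le 1$. Moreover $A_n$ commutes with $T$, being a polynomial in $T$. Therefore
\[
\|Qx_n\|=\|A_nQx_0\|\le \varepsilon+\|A_n(\Id-T)w\|=\varepsilon+\|(\Id-T)A_nw\|,
\]
where the first equality holds because $A_n$ commutes with $Q$ as well. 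The last term is $\|x_n(w)-Tx_n(w)\|$ for the FISTA sequence started at $w$, which tends to $0$ by \cref{l:affine}\cref{l:affine3}. Hence $\varlimsup\|Qx_n\|\le\varepsilon$, so $x_n\to Px_0$.

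\emph{Step 3: the $y$-sequence.} Finally, $y_n\to Px_0$ follows from $x_n-y_n\to 0$ (\cref{f:2}\cref{f:2.3}). The delicate points to verify are that $A_n$ is a polynomial in $T$ (clear by induction on the two-step recursion), that $A_n$ commutes with $Q$ (which holds since $Q=\Id-P$ and $PT=TP$), and that \cref{f:2}\cref{f:2.2} indeed yields $\|A_n\|\le1$.
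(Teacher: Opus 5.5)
Your proof is correct and is essentially the paper's argument. The paper's Claim~1 (that $d_n=x_n-P_{\Fix T}x_0-(\Id-T)u_n$ is itself a FISTA sequence) is your observation that the FISTA map $A_n$ is linear, commutes with $T$, and fixes $P_{\Fix T}x_0$; its Claim~2 is your bound $\|A_n\|\le 1$ from \cref{f:2}\cref{f:2.2} with $s=0$, after which both arguments approximate $x_0-P_{\Fix T}x_0$ by $(\Id-T)w$ and apply \cref{l:affine}\cref{l:affine3} to the FISTA sequence started at $w$. Your Step~1, and the appeal to $\Fix T=\Fix T^*$ to make $A_n$ commute with $Q$, are not needed, since $A_nP_{\Fix T}x_0=P_{\Fix T}x_0$ already gives $x_n-P_{\Fix T}x_0=A_n(x_0-P_{\Fix T}x_0)$; you should also drop the abandoned ``restart at $x_m$'' remark, because restarting FISTA does not reproduce the tail of the sequence.
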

\begin{proof}
Let $\varepsilon>0$. 
Because $x_0-P_{\Fix T}x_0 \in (\Fix T)^\perp$, we learn from 
\cref{f:2.6}\cref{f:2.6i} that there exists 
$u_0\in X$ such that 
\begin{equation}
\label{e:linear1}
\|x_0-P_{\Fix T}x_0-(\Id-T)u_0\|<\varepsilon. 
\end{equation}
Now suppose that 
\begin{equation}
\label{e:260316e}
\text{$(u_n,v_n)_\nnn$ is the combined FISTA sequence
with starting point $u_0$.}
\end{equation}
Next, for every $\nnn$, we set 
\begin{subequations}
\begin{align}
d_n&:=x_n-P_{\Fix T}x_0-(\Id-T)u_n,\label{e:dn}\\
e_n&:=y_n-P_{\Fix T}x_0-(\Id-T)v_n. \label{e:en}
\end{align}
\end{subequations}

\textbf{Claim~1:} $(d_n,e_n)_\nnn$ is the combined FISTA 
sequence with starting point $d_0$.\\ 
We prove \textbf{Claim~1} by induction. 
Because $v_0 =u_0$ by definition, we obtain 
\begin{align*}
d_0 =x_0-P_{\Fix T}x_0-(\Id-T)u_0
=y_0-P_{\Fix T} x_0-(\Id-T)v_0
= e_0, 
\end{align*}
which verifies the base case. 
Let $\nnn$ and suppose we have now 
established the identity up to this index $n$. 
Then 
\begin{align}
d_{n+1}
&= x_{n+1}-P_{\Fix T}x_0-(\Id-T)u_{n+1} \tag{by \cref{e:dn}}\\
&= Ty_{n}-P_{\Fix T}x_0-(\Id-T)Tv_{n} \tag{by \cref{e:standard1} and 
\cref{e:260316e}}\\
&= T\big(y_{n}-P_{\Fix T}x_0-(\Id-T)v_{n}\big) 
\tag{by the linearity of $T$}\\
&= Te_n \tag{by \cref{e:en}}
\end{align}
and 
\begin{align}
e_{n+1}
&= y_{n+1}-P_{\Fix T}x_0-(\Id-T)v_{n+1}  \tag{by definition}\\
&= x_{n+1}+\tfrac{t_n-1}{t_{n+1}}(x_{n+1}-x_n)-P_{\Fix T}x_0-(\Id-T)\big(u_{n+1}+\tfrac{t_n-1}{t_{n+1}}(u_{n+1}-u_n)\big) 
\tag{by \cref{e:standard2} and  \cref{e:260316e}}\\
&= x_{n+1}-P_{\Fix T}x_0-(\Id-T)u_{n+1}+\tfrac{t_n-1}{t_{n+1}}\big[\big(x_{n+1}-(\Id-T)u_{n+1}\big)-\big(x_n-(\Id-T)u_n\big)\big] \tag{by the linearity of $T$}\\
&= d_{n+1}+\tfrac{t_n-1}{t_{n+1}}(d_{n+1}-d_n), \tag{by \cref{e:dn}}
\end{align}
which completes the proof of \textbf{Claim~1}.
      
      \textbf{Claim~2:} $(\forall \nnn)$ $\|d_n\|\leq \|d_0\|$.\\
Because $T$ is linear, we have that $0\in\Fix T$. 
By \cref{f:2}\cref{f:2.4}, $0\in S$. 
Hence \textbf{Claim~2} follows from \textbf{Claim~1} combined with \cref{f:2}\cref{f:2.2} (applied to $(d_n,e_n)_\nnn$). 

We now estimate
\begin{align}
\|x_n-P_{\Fix T}x_0\|
&= 
\|d_n+(\Id-T)u_n\|
\tag{by \cref{e:dn}}\\
&\leq 
\|d_n\|+\|(\Id-T)u_n\|
\tag{by the triangle inequality}\\
&\leq 
\|d_0\|+\|(\Id-T)u_n\|
\tag{by \textbf{Claim~2}}\\
&\to \|d_0\|
\tag{by \cref{e:260316e} and \cref{l:affine}\cref{l:affine3}}\\
&< \varepsilon,
\tag{by \cref{e:dn} and \cref{e:linear1}}
\end{align}
which yields $\varlimsup_{n\to\infty}\|x_n-P_{\Fix T}x_0\|\leq \varepsilon$.
Because we started with an arbitary $\varepsilon>0$, 
we see that $x_n\to P_{\Fix T}x_0$. 
Finally, this and \cref{f:2}\cref{f:2.3} imply 
$y_n = x_n + (y_n-x_n)\to P_{\Fix T}x_0$.
\end{proof}

A perturbation technique allows us now to extend \cref{t:linear} 
from the linear to the more general affine setting: 

\begin{theorem}[Affine Main Result]
\label{t:affinenew}
Assume that \cref{intro} holds, and 
let $(x_n)_\nnn,(y_n)_\nnn$ be generated 
by \cref{a:FISTA}. 
Furthermore, suppose that $T$ is \emph{affine}. 
Then
\begin{align}
  x_n\to P_Sx_0
\quad\text{and}\quad
  y_n\to P_Sx_0
\end{align}
\end{theorem}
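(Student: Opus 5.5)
Since $T$ is affine, we write $T = L + q$ with $L := T - T0$ linear and $q := T0$. Nonexpansiveness of $T$ (\cref{f:2}\cref{f:2ne}) transfers to $L$, because $Lx - Ly = Tx - Ty$. Moreover $\Fix T = S \neq\varnothing$ by \cref{f:2}\cref{f:2.4}. So \cref{f:2.6} applies and supplies $a\in S$ with $S = a + \Fix L$ and $P_S x = a + P_{\Fix L}(x-a)$ for all $x$. The plan is to translate the iteration by $a$: set $\tilde x_n := x_n - a$ and $\tilde y_n := y_n - a$. Because $Ta = a$, we have $T(y) - a = L(y-a)$, so
\[
\tilde x_{n+1} = L\tilde y_n, \qquad \tilde y_{n+1} = \tilde x_{n+1} + \tfrac{t_n-1}{t_{n+1}}(\tilde x_{n+1}-\tilde x_n),
\]
since the extrapolation step is affine with coefficients summing to one. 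Thus $(\tilde x_n,\tilde y_n)_\nnn$ is a FISTA-type iteration driven by the linear map $L$, started at $x_0 - a$.

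To invoke \cref{t:linear} we need $L$ to be the proximal gradient operator of some pair satisfying \cref{intro}. The natural choice is the translated pair $\tilde f := f(\cdot + a) - \scal{\nabla f(a)}{\cdot}$ and $\tilde g := g(\cdot + a) + \scal{\nabla f(a)}{\cdot}$. Then $\tilde f$ is convex and $\beta$-smooth, $\tilde g$ is convex, lower semicontinuous and proper, and the sum satisfies $\tilde f + \tilde g = F(\cdot + a)$, whose argmin is $S - a \ni 0$. Let $\tilde T$ denote the proximal gradient operator of this pair. We have $\nabla\tilde f(x) = \nabla f(x+a) - \nabla f(a)$, and adding a linear term to $g$ shifts its prox: $\prox_{\frac1\beta\tilde g}(z) = \prox_{\frac1\beta g}\big(z + a - \tfrac1\beta\nabla f(a)\big) - a$. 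Combining these gives $\tilde T x = T(x+a) - a = Lx$. (In fact the simpler translates $f(\cdot+a)$, $g(\cdot+a)$ already give $\tilde T x = T(x+a)-a$; the linear tilt is unnecessary, so we will use the plain translates.) Hence $\tilde T = L$ is linear and \cref{intro} holds for the translated pair.

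Applying \cref{t:linear} now gives $\tilde x_n \to P_{\Fix L}(x_0 - a)$ and $\tilde y_n \to P_{\Fix L}(x_0-a)$. Adding $a$ back and using \cref{f:2.6}\cref{f:2.6iii}, we obtain $x_n \to a + P_{\Fix L}(x_0-a) = P_S x_0$, and likewise $y_n \to P_S x_0$.

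The main obstacle is the verification that translation commutes with both the proximal gradient map and the FISTA recursion, so that the hypotheses of \cref{t:linear} really are met. Once that is checked, the rest is bookkeeping. Note also that \cref{t:linear}'s proof uses only \cref{intro}-level facts about $\tilde T$, so this is exactly what must be confirmed.
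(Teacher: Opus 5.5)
Your proposal is correct and follows the paper's proof: write $T=L+q$, take $a\in S$ from \cref{f:2.6}, shift the iterates by $a$, realize $L$ as the proximal gradient operator of an auxiliary pair, apply \cref{t:linear}, and undo the shift with \cref{f:2.6}\cref{f:2.6iii}. The only difference is the auxiliary pair: the paper uses $\widetilde{f}=f+\beta\scal{\cdot}{q}$ and $\widetilde{g}=g(\cdot+q)$, while your plain translates $f(\cdot+a)$, $g(\cdot+a)$ also yield $L$ and make $\Argmin(\widetilde{f}+\widetilde{g})=S-a\ni 0$ immediate, so \cref{intro} visibly holds for the new pair.
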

\begin{proof}
By \cref{f:2}\cref{f:2ne}, $T$ is nonexpansive. 
By assumption and \cref{f:2}\cref{f:2.4},   
$\Fix T = S\neq\varnothing$. 
Because $T$ is affine, we may and do write 
\begin{equation}
\label{e:260317c}
T\colon X\to X\colon x\mapsto Lx+q,
\quad\text{where $L\colon X\to X$ is linear and nonexpansive, and $q\in X$.}
\end{equation}
In view of \cref{f:2.6}\cref{f:2.6ii}, 
there exists $a\in X $ such that 
\begin{equation}
\label{e:260317d}
a = Ta = La+q
\quad\text{and}\quad 
S = \Fix T = a+ \Fix L.
\end{equation}
Now set 
\begin{equation}
\widetilde{f}\colon X\to \RR\colon 
x\mapsto f(x) + \beta\scal{x}{q}.
\end{equation}
Then $\nabla\widetilde{f} = \nabla f(\cdot)+\beta q$, 
which shows that 
$\widetilde{f}$ is $\beta$-smooth with
$\tfrac{1}{\beta}\nabla\widetilde{f} = 
\tfrac{1}{\beta}\nabla f(\cdot)+q$, and 
\begin{equation}
\label{e:260317a}
(\forall x\in X)\quad
x-\tfrac{1}{\beta}\nabla\widetilde{f}(x) = 
x- \tfrac{1}{\beta}\nabla f(x)-q.
\end{equation}
Next, we set 
\begin{equation}
\widetilde{g}\colon X\to \RX\colon
x\mapsto g(x+q).
\end{equation}
Let $x\in X$. 
By \cite[Proposition~24.8(ii)]{BC2017}, 
\begin{equation}
\label{e:260317b}
P_{\frac{1}{\beta}\widetilde{g}}(x)
= -q + P_{\frac{1}{\beta}g}(x+q).
\end{equation}
Altogether, 
\begin{align}
P_{\frac{1}{\beta}\widetilde{g}}\big( 
x - \tfrac{1}{\beta}\nabla\widetilde{f}(x)
\big) 
&=
P_{\frac{1}{\beta}\widetilde{g}}\big(
x- \tfrac{1}{\beta}\nabla f(x)-q
\big)
\tag{by \cref{e:260317a}}\\
&=
-q + P_{\frac{1}{\beta}g}\big(
(x- \tfrac{1}{\beta}\nabla f(x)-q)+q
\big)
\tag{by \cref{e:260317b}}\\
&=-q+Tx\tag{by the definition of $T$}\\
&= Lx,
\tag{by \cref{e:260317c}}
\end{align}
which reveals $L$ as the prox-grad operator 
of $(\widetilde{f},\widetilde{g})$, with smoothness parameter 
$\beta$. 

For every $\nnn$, we have 
\begin{align}
x_{n+1}-a 
&= Ty_n-a \tag{by assumption}\\
&= Ly_n+q-a \tag{by \cref{e:260317c}}\\
&= Ly_n-La \tag{by \cref{e:260317d}}\\
&= L(y_n-a) \notag
\end{align}
and 
\begin{align}
y_{n+1}-a 
&= 
(x_{n+1}-a)+\frac{t_n-1}{t_{n+1}}\big((x_{n+1}-a)-(x_n-a)\big).
\end{align}
Thus, 
$(x_n-a,y_n-a)_\nnn$ is the 
combined FISTA sequence of $(\widetilde{f},\widetilde{g})$
with smoothness parameter $\beta$ and starting point $x_0-a$. 
\cref{t:linear} now yields
\begin{equation}
x_n-a \to P_{\Fix L}(x_0-a) \leftarrow y_n-a.
\end{equation}
Therefore, 
using also \cref{f:2.6}\cref{f:2.6iii} and \cref{f:2}\cref{f:2.4}, we obtain
\begin{equation}
\lim_{n\to\infty} x_n 
= \lim_{n\to \infty} y_n 
= a+P_{\Fix L}(x_0-a)
= P_{\Fix T}x_0 = P_Sx_0,
\end{equation}
and we're done. 
\end{proof}

    \section{Quadratic Minimization with an 
Affine Constraint}

\label{s:ex}
    
In this section, we apply the affine main result (\cref{t:affinenew}) 
to a specific setting, where we assume that 
\begin{subequations}
\label{last}
\begin{empheq}[box=\mybluebox]{align}
& Y \text{ is a real Hilbert space,} \label{lastY}\\
& A \colon X\to Y \text{ is linear, continuous, and nonzero,} 
\label{lastA}\\
& b\in Y, \label{lastb}\\
& f \colon x\mapsto \thalb\|Ax-b\|^2, \label{lastf}\\
& \beta \geq \|A^*A\|, \label{lastbeta}\\
& V \text{ is a closed affine subspace of $X$,} \label{lastV}\\
& g = \iota_V \label{lastg}.
\end{empheq}
\end{subequations}
Then our original problem \cref{e:theprob} of 
minimizing $F = f+g$ amounts to 
the problem 
\begin{equation}
\label{e:ex}
\begin{split}
   \text{minimize } &\tfrac{1}{2}\|Ax-b\|^2\\
   \text{subject to } &x\in V, 
\end{split}
\end{equation}
where the objective function 
$x\mapsto \thalb\|Ax-b\|^2$ is convex and quadratic, the 
constraint set $V$ is affine, and the set of solutions $S$ is assumed
to be nonempty. 

We are now ready for the following strong convergence result
\emph{which appears to be new even in the unconstrained ($V=X$) case}: 

\begin{theorem}[affine-quadratic minimization]
\label{t:ex}
Assume that \cref{intro} holds, and 
let $(x_n)_\nnn,(y_n)_\nnn$ be generated 
by \cref{a:FISTA}. 
Furthermore, suppose that \cref{last} holds. 
Consider the optimization problem \cref{e:ex}, and set 
$v_0:=P_V0$, and $\Par V:=V-v_0$. 
Then the proximal gradient operator is given by 
\begin{equation}
\label{e:260319a}
T\colon x\mapsto P_{\Par V}(\Id-\tfrac{1}{\beta}A^*A)x+\tfrac{1}{\beta}P_{\Par V}A^*b+v_0,
\end{equation}
and 
\begin{align}
\label{e:newstrong}
x_n\to P_{S}x_0
\;\;\text{and}\;\;
y_n\to P_{S}x_0. 
\end{align}
where $S$ is the set of solutions of \cref{e:ex}. 
\end{theorem}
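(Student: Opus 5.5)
The plan is to reduce everything to \cref{t:affinenew}. Two things must be checked: that \cref{intro} holds in the setting \cref{last}, and that the proximal gradient operator $T$ is affine and given by \cref{e:260319a}.

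\emph{Verifying \cref{intro}.} The function $f\colon x\mapsto\thalb\|Ax-b\|^2$ is convex and Fr\'echet differentiable with $\nabla f(x)=A^*(Ax-b)$. This gradient is Lipschitz with constant $\|A^*A\|\leq\beta$, so $f$ is $\beta$-smooth. Since $V$ is a nonempty closed affine subspace, $g=\iota_V$ is convex, lower semicontinuous and proper. Nonemptiness of $S$ is part of the standing assumption \cref{intro}, so nothing further is needed there.

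\emph{Computing $T$.} First,
\[
x-\tfrac{1}{\beta}\nabla f(x)=(\Id-\tfrac{1}{\beta}A^*A)x+\tfrac{1}{\beta}A^*b.
\]
Next, $\prox_{\frac1\beta\iota_V}=P_V$. Because $v_0=P_V0$, we have $v_0\perp\Par V$ and $V=v_0+\Par V$. The standard translation formula for projections onto affine subspaces then gives $P_Vz=v_0+P_{\Par V}(z-v_0)=P_{\Par V}z+v_0$, where the last step uses linearity of $P_{\Par V}$ and $P_{\Par V}v_0=0$. Substituting the displayed expression for $z$ yields \cref{e:260319a}.

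\emph{Concluding.} Formula \cref{e:260319a} shows that $T$ is affine: its linear part is $L:=P_{\Par V}(\Id-\tfrac1\beta A^*A)$, which is nonexpansive because $0\preceq \tfrac1\beta A^*A\preceq\Id$ (recall $\beta\geq\|A^*A\|$), and its constant part is $q:=\tfrac1\beta P_{\Par V}A^*b+v_0$. Hence \cref{t:affinenew} applies directly and gives $x_n\to P_Sx_0$ and $y_n\to P_Sx_0$, where $S=\Argmin(f+\iota_V)$ is exactly the solution set of \cref{e:ex}.

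I do not expect any serious obstacle. The only point needing care is the projection identity $P_V=P_{\Par V}+v_0$, which rests on $v_0\in(\Par V)^\perp$: indeed $v_0$ is the minimal-norm element of $V$, so $\scal{v_0}{v-v_0}\geq0$ for all $v\in V$, and this becomes equality because $V-v_0$ is a linear subspace. The nonexpansiveness of $L$ is not logically needed, since it is already supplied by \cref{f:2}\cref{f:2ne} inside \cref{t:affinenew}.
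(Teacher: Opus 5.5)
Your proposal is correct and matches the paper's proof: the paper also notes that formula \cref{e:260319a} makes $T$ affine and then applies \cref{t:affinenew}. The only difference is that you derive \cref{e:260319a} yourself, via $P_V=P_{\Par V}+v_0$ (which uses $v_0\perp\Par V$), whereas the paper cites Section~4 of Moursi--Naguib--Pavlovic--Vavasis for it.
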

\begin{proof}
The formula \cref{e:260319a}, 
which was derived in \cite[Section~4]{MNPV}, makes it 
clear that $T$ is affine. 
The result thus follows from \cref{t:affinenew}. 
\end{proof}

\begin{remark}[comparison to \cite{MNPV}]
Recall the setting of \cref{t:ex}.
Moursi et al.\ show (see \cref{f:Walaa} above and 
\cite[Theorem~5.4]{MNPV}) 
that $x_n\to P_Sx_0$ provided that 
\begin{subequations}
\begin{equation}
\label{e:walaa1}
F(x_n)\to \mu
\end{equation}
and 
\begin{equation}
\label{e:walaa2}
\ran A\; \text{ is closed, and }\;
\Par V + \ker A \text{ is closed}
\end{equation}
\end{subequations}
hold. 
Note that \cref{e:walaa1} is automatic 
in our present setting (see \cref{e:value}); however, 
Moursi et al.\ allow for potentially more general parameter
sequences. 
Indeed, any parameter sequence that ensures \cref{e:walaa1} holds will work.
While \cref{e:walaa2} covers already a good amount of examples 
because it is automatic in finite-dimensional settings  (see \cite[Section~7]{MNPV}), our present setting does not 
require these closedness assumptions in \cref{e:walaa2}. 
\end{remark}

\begin{corollary}[accelerated alternating affine projections]
\label{c:aaap}
Assume that \cref{intro} holds, and 
let $(x_n)_\nnn$, $(y_n)_\nnn$ be generated 
by \cref{a:FISTA}.
Furthermore, suppose that \cref{last} holds, 
with 
$f = \thalb d_U^2$, where $U$ is a
closed affine subspace of $X$ and where\footnote{The assumption $S\neq \varnothing$ is true if, for instance, $U \cap V \neq \varnothing$, in which case 
		$\fix P_VP_U =U\cap V$, or if $U+V$ 
		is closed. }
$S =\fix P_VP_U \neq\varnothing$.
Then $T = P_VP_U$ and
\begin{align}
x_n\to P_{\fix P_VP_U}x_0
	\;\;\text{and}\;\;
	y_n\to P_{\fix P_VP_U}x_0. 
\end{align}
Note that if $U\cap V\neq \varnothing$,
then $S=U\cap V$ and  
\begin{align}
x_n\to P_{U\cap V}x_0
\;\;\text{and}\;\;
y_n\to P_{U\cap V}x_0. 
\end{align}
\end{corollary}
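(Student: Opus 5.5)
The corollary should follow from \cref{t:ex} (equivalently \cref{t:affinenew}) once we check that the setting fits \cref{last} and that the prox-grad operator equals $P_VP_U$.

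\textbf{Step 1: put $f$ in the form of \cref{last}.} Write $U=u_0+\Par U$ with $u_0:=P_U0$. Take $Y:=X$, $A:=P_{(\Par U)^\perp}=\Id-P_{\Par U}$, and $b:=P_{(\Par U)^\perp}u_0 = u_0$; the last equality holds because $u_0\perp\Par U$. Then $x-P_Ux = P_{(\Par U)^\perp}(x-u_0)=Ax-b$, so $f=\thalb d_U^2=\thalb\|Ax-b\|^2$. Moreover $A^*A=A$ is an orthogonal projector, so $\|A^*A\|\le 1$. If $U\neq X$, then $A\neq 0$ and \cref{lastbeta} holds with $\beta:=1$. (If $U=X$, then $f\equiv 0$; this degenerate case is trivial and can be excluded or handled directly.) The corollary's statement implicitly fixes $\beta=1$, since $\nabla f=\Id-P_U$ is $1$-Lipschitz.

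\textbf{Step 2: identify $T$.} With $\beta=1$ we have $\Id-\nabla f = \Id-(\Id-P_U)=P_U$, so $T=\prox_{\iota_V}\circ P_U = P_VP_U$. This can also be cross-checked against \cref{e:260319a}. The map $T$ is affine, and \cref{f:2}\cref{f:2.4} gives $S=\Fix T=\fix P_VP_U$, matching the hypothesis $S\neq\varnothing$.

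\textbf{Step 3: conclude.} Apply \cref{t:ex}, or directly \cref{t:affinenew}, to obtain $x_n\to P_Sx_0$ and $y_n\to P_Sx_0$ with $S=\fix P_VP_U$.

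\textbf{Step 4: the case $U\cap V\neq\varnothing$.} Here the minimum value of $F$ is $0$, attained exactly on $U\cap V$, so $S=U\cap V$ and the second display follows immediately. Alternatively, one can use the classical fact that $\Fix P_VP_U=U\cap V$ for intersecting closed convex sets.

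The only mildly delicate point is the choice $\beta=1$ in the FISTA setup together with the degenerate case $U=X$. Everything else is a direct specialization of \cref{t:ex}.
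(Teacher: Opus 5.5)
Your proposal is correct and is essentially the intended argument: the paper gives no separate proof, treating the corollary as a direct specialization of \cref{t:ex} (hence of \cref{t:affinenew}). Your realization $A=P_{(\Par U)^\perp}$, $b=P_U0$, $\beta=1$, the identification $T=P_VP_U$ as an affine map, and $S=\Argmin F=U\cap V$ when $U\cap V\neq\varnothing$ supply the details the paper leaves implicit, and your remarks that $T=P_VP_U$ forces $\beta=1$ and that the degenerate case $U=X$ (where $A=0$) is covered directly by \cref{t:affinenew} are both appropriate.
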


\begin{corollary}[Nesterov's acceleration for a least squares
objective] 
\label{c:Nesquad}
Assume that \cref{intro} holds, and 
let $(x_n)_\nnn$, $(y_n)_\nnn$ be generated 
by \cref{a:FISTA}.
Furthermore, suppose that \cref{last} holds, 
with $V=X$. Then 
$T \colon x \mapsto  x-\tfrac{1}{\beta}A^*(Ax-b)$ and 
\begin{align}
x_n\to P_{S}x_0
\;\;\text{and}\;\;
y_n\to P_{S}x_0, 
\end{align}
where $S = \Argmin f \neq\varnothing$ is assumed to be nonempty. 
\end{corollary}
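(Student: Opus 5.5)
The corollary follows directly from \cref{t:ex} (equivalently \cref{t:affinenew}) once we confirm that the specialization $V=X$ is admissible and compute the proximal gradient operator.

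First, we check that the hypotheses \cref{last} hold with $V=X$. The whole space $X$ is trivially a closed affine subspace, so \cref{lastV} holds. Then $g=\iota_X\equiv 0$, which is convex, lower semicontinuous, and proper, and $F=f$. Hence the solution set is $S=\Argmin F=\Argmin f$, which is nonempty by assumption. For \cref{introf}, note that $\nabla f(x)=A^*(Ax-b)$ is Lipschitz with constant $\|A^*A\|\leq\beta$. So $f$ is convex and $\beta$-smooth, and \cref{intro} holds.

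Second, we compute $T$. We have $\prox_{\frac1\beta g}=\Id$, so directly from \cref{introT}
\begin{equation*}
T=\Id-\tfrac1\beta\nabla f\colon x\mapsto x-\tfrac1\beta A^*(Ax-b).
\end{equation*}
As a consistency check, \cref{e:260319a} gives the same formula: with $V=X$ we get $v_0=P_X0=0$ and $\Par V=X$, hence $P_{\Par V}=\Id$. Writing $T$ as $x\mapsto(\Id-\tfrac1\beta A^*A)x+\tfrac1\beta A^*b$ shows that $T$ is a bounded linear operator plus a constant vector, so $T$ is affine.

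Finally, \cref{t:ex} applies to this setting, or equivalently \cref{t:affinenew} applies directly since $T$ is affine. It yields $x_n\to P_Sx_0$ and $y_n\to P_Sx_0$ with $S=\Argmin f$. There is no real obstacle here; the only point needing care is verifying $\beta$-smoothness from $\beta\geq\|A^*A\|$, and this is immediate.
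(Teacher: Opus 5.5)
Your proposal is correct and follows the paper's intended route: the paper states this corollary without a separate proof, as the $V=X$ specialization of \cref{t:ex}. There $v_0=0$ and $P_{\Par V}=\Id$, so \cref{e:260319a} reduces to the affine gradient step $x\mapsto x-\tfrac1\beta A^*(Ax-b)$, and \cref{t:affinenew} then yields $x_n\to P_Sx_0$ and $y_n\to P_Sx_0$.
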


\begin{remark}[Nesterov's acceleration for a general 
convex quadratic objective]
Suppose (momentarily) that $f$ is given by $x\mapsto 
\thalb\scal{x}{Ax} + \scal{x}{b}$, 
where $A=A^*$ and $A$ is monotone. 
Then $\nabla f(x) = Ax + b$ is $\beta$-Lipschitz when 
$\beta\geq\|A\|$ and strong convergence follows again from 
\cref{t:affinenew}
because $T \colon x\mapsto x- \tfrac{1}{\beta}Ax-\tfrac{1}{\beta}b$ is clearly affine. 
\end{remark}

We conclude by presenting three scenarios where the 
strong convergence result \cref{e:newstrong} is
a truly novel conclusion. In each of these examples, 
$(x_n,y_n)_\nnn$ is the combined FISTA sequence 
with starting point $x_0\in X$ chosen arbitrarily.

\begin{example}[accelerated alternating linear projections]
Suppose that $X=\ell^2(\{1,2,\ldots\})$ 
with the usual Schauder basis $(e_n)_{n\geq 1}$, 
$Y = X$, $b=0$, and
set 
\begin{align}
U&:=\cspan\{e_1,e_3,e_5,\ldots\}.
\end{align}
Suppose that 
\begin{subequations}
\begin{align}
A&=\Id-P_U = P_{U^\perp},\\
V&=\cspan\{ \cos(\gamma_n)e_{2n-1}+\sin(\gamma_n)e_{2n}\ |\ n\in \{1, 2, \ldots\}\},
\end{align}
\end{subequations}
and 
let $(\gamma_n)_\nnn$ be a sequence in $]0, \tfrac{\pi}{2}[$ 
that decreases to $0$. 
The prox-grad operator $T$ coincides with the 
alternating projections operator $P_VP_U$ in this setting. 
While $\ran(A) = U^\perp$ is closed, 
the Minkowski sum $\Par V + \ker A = V+U$ is not closed 
(by \cite[Example~4.1(iv)]{BE2005}); thus, 
\cref{e:walaa2} fails.
Moreover, 
\begin{align}
S=\Argmin (f+g)=\Argmin \big(\tfrac{1}{2}d^2_U+\iota_{V}\big)=U\cap V=\{0\}
\end{align}
and so $P_S\equiv 0$. 
Therefore, by \cref{t:ex}, $x_n\to 0$ and $y_n\to 0$.
\end{example}

\begin{example}
Suppose that $X=\ell^2(\NN)$, 
with the usual Schauder basis $(e_n)_{\nnn}$, 
$Y=X$, and let $R\colon X\to X$ be the right-shift operator. 
Suppose that $A = \Id-R$, $b=0$, and $V=\RR\cdot e_0$. 
This time, $\ran A$ is not closed\footnote{
Indeed, note first that $\ker(\Id-R) = \{0\}^\perp = X$, so 
for every $n\geq 1$, 
$z_n := (e_1+\cdots+e_{n})/\sqrt{n} \in X$ with 
$\|z_n\|=1$. On the other hand, 
$\|(\Id-R)z_n\| = \|e_1-e_{n+1}\|/\sqrt{n} = 
\sqrt{2/n}\to 0$. 
By \cite[Fact~2.26]{BC2017}, $\ran(\Id-R) = \ran(A)$ 
is not closed.}, 
so \cref{e:walaa2} fails. 
Note that the unique minimizer of 
$f(x) = \thalb\|x-Rx\|^2$ is $0$, which lies already in $V$. 
Therefore, by \cref{t:ex}, $x_n\to 0$ and $y_n\to 0$.
\end{example}

\begin{example}
Suppose that $X=\ell^2(\NN)$, 
with the usual Schauder basis $(e_n)_{\nnn}$, 
$Y=V=X$, and suppose that 
$A = \Diag(\gamma_n)_\nnn\colon 
(x_n)_\nnn\mapsto (\gamma_nx_n)_\nnn$, where 
$(\gamma_n)_\nnn$ is a fixed sequence of positive 
real numbers with $\gamma_n\to 0$, and $b=0$.
Then $\ran A$ is not closed\footnote{
Indeed, 
$\ker A = \{0\}$. So $e_n \in (\ker A)^\perp$ 
and $\|e_n\|=1$ 
for all $\nnn$. 
Moreover, $\|Ae_n\| = \gamma_n \to 0$. 
By \cite[Fact~2.26]{BC2017}, $\ran(A)$ is not closed.} 
so \cref{e:walaa2} fails. 
The unique unconstrained minimizer of $f(x)=\thalb\|Ax\|^2$ is 
$0$; thus, 
by \cref{t:ex}, $x_n\to 0$ and $y_n\to 0$.
\end{example}

\section*{Acknowledgments}
The research of SB was partially supported by a collaboration grant for mathematicians of the Simons Foundation. 
The research of HHB and WMM was partially supported 
by Discovery Grants 
of the Natural Sciences and Engineering Research Council of
Canada. 
The research of WMM is also partially supported 
by the Ontario Early Researcher Award.

\end{document}